\newcommand{\BR}{{\mathbb R}}
\newcommand{\BC}{{\mathbb C}}
\newcommand{\BN}{{\mathbb N}}
\newcommand{\D}{\bm D}
\newcommand{\X}{\bm x}
\newcommand{\XI}{\bm \xi}
\newcommand{\Y}{\bm y}
\newcommand{\e}{{\bf e}}
\newcommand{\cl}{C \kern -0.1em \ell}
\begin{document}

\title{On Fundamental Solutions of Higher-Order Space-Fractional Dirac equations\protect\thanks{ORCID iD:~\href{https://orcid.org/0000-0002-9117-2021}{https://orcid.org/0000-0002-9117-2021}.}}

\author[1,2]{N. Faustino*}

\authormark{NELSON FAUSTINO}

\address[1]{\orgdiv{Faculty of Economics}, \orgname{University of Coimbra (FEUC)}, \orgaddress{\state{Coimbra}, \country{Portugal}}}

\address[2]{\orgdiv{Center for Research and Development in Mathematics and Applications (CIDMA)}, \orgname{University of Aveiro}, \orgaddress{\state{Aveiro}, \country{Portugal}}}

\corres{*Corresponding author\\ \email{\href{mailto:nelson@fe.uc.pt}{nelson@fe.uc.pt}} \\\href{mailto:nelson.faustino@ymail.com}{nelson.faustino@ymail.com}}

\presentaddress{Faculdade de Economia da Universidade de Coimbra 
	Av. Dias da Silva, 165,
	3004-512 Coimbra,
	Portugal}

\abstract[Summary]{
	
Starting from the pseudo-differential decomposition $\D=(-\Delta)^{\frac{1}{2}}\mathcal{H}$ of the Dirac operator $\displaystyle \D=\sum_{j=1}^n\e_j\partial_{x_j}$ in terms of the fractional operator $(-\Delta)^{\frac{1}{2}}$ of order $1$ and of the Riesz-Hilbert type operator $\mathcal{H}$ we will investigate the fundamental solutions of the space-fractional Dirac equation of L\'evy-Feller type
$$
 \partial_t\Phi_\alpha(\X,t;\theta)=-(-\Delta)^{\frac{\alpha}{2}}\exp\left( \frac{i\pi\theta}{2} \mathcal{H}\right)\Phi_\alpha(\X,t;\theta)
$$
involving the fractional Laplacian $-(-\Delta)^{\frac{\alpha}{2}}$ of order $\alpha$, with $2m\leq \alpha <2m+2$ ($m\in \BN$), and the exponentiation operator $\exp\left( \frac{i\pi\theta}{2} \mathcal{H}\right)$  as the hypercomplex counterpart of the fractional Riesz-Hilbert transform carrying the \textit{skewness parameter} $\theta$, with values in the range $|\theta|\leq \min\{\alpha-2m,2m+2-\alpha\}$.

Such model problem permits us to obtain hypercomplex counterparts for the fundamental solutions of higher-order heat-type equations $\partial_t F_M(x,t)=\kappa_M(\partial_x)^M F_M(x,t)$ $(M=2,3,\ldots)$ in case where the even powers resp. odd powers  $\D^{2m}=(-\Delta)^{m}$ ($M=\alpha=2m$) resp. $\D^{2m+1}=(-\Delta)^{m+\frac{1}{2}}\mathcal{H}$ ($M=\alpha=2m+1$) of $\D$ are being considered.

}

\keywords{fundamental solutions, Mellin-Barnes integral represntations, Riesz-Hilbert transform, space-fractional Dirac equation, Wright series expansions}


\maketitle



\section*{Dedicatory}

This paper is dedicated with esteem \textit{to Prof. Dr. rer. nat. habil. Klaus G\"urlebeck (Bauhaus-Universit\"at Weimar) on the occasion of his $65{th}$ birthday}.

\section{Introduction}

\subsection{State of Art}

Let $t>0$ and $\X,\XI\in \BR^n$. For real-valued functions $\varphi$ resp. $\psi$ with membership in the Schwarz space $\mathcal{S}(\BR^n)$ resp. the \textit{space of tempered distributions} $\mathcal{S}'(\BR^n)$, let
\begin{eqnarray}
\label{FourierTransform}
(\mathcal{F} \varphi)(\XI)=\int_{\BR^n} \varphi(\X)e^{-i \langle \X,\XI \rangle}d\X, & ~\varphi\in \mathcal{S}(\BR^n)~
\end{eqnarray}
be the Fourier transform over $\BR^n$ and
\begin{eqnarray}
\label{FourierInverse}(\mathcal{F}^{-1} \psi)(\X)=\frac{1}{(2\pi)^n}\int_{\BR^n} \psi(\XI)e^{i \langle \X,\XI \rangle}d\XI, & ~\psi\in \mathcal{S}(\BR^n)~ 
\end{eqnarray}
its inverse.

The kernel function $K_{\alpha,n}(\X,\tau)$ defined through the Fourier inversion formula $K_{\alpha,n}(\X,\tau)=(\mathcal{F}^{-1} e^{-\tau|\XI|^{\alpha}})(\X)$:
\begin{eqnarray}
\label{LevyDistribution}
K_{\alpha,n}(\X,\tau)= \frac{1}{(2\pi)^n}\int_{\mathbb{R}^n} e^{-\tau |\XI|^{\alpha}} e^{i\langle \X,\XI \rangle}d\XI &(~\Re(\tau)\geq 0~)
\end{eqnarray}
arises as the fundamental solution of the fractional heat equation of order $\alpha$  $(\alpha> 0)$ 
\begin{eqnarray}
	\label{FractionalHeat}\partial_t\Phi_\alpha(\X,t)=-(-\Delta)^{\frac{\alpha}{2}}\Phi_\alpha(\X,t).
\end{eqnarray}

Indeed, for $\tau=t$ the kernel function $K_{\alpha,n}(\X,t)$ solves eq. (\ref{FractionalHeat}) and satisfies the initial condition
$$\displaystyle K_{\alpha,n}(\X,0):=\lim_{t\rightarrow 0^+}K_{\alpha,n}(\X,t)=\delta(\X).$$

An effective way to reduce the right-hand side of (\ref{LevyDistribution}) to a one-dimensional integral is provided in \cite[p. 485, Lemma 25.1]{samko1993fractional} by the following Fourier inversion formula
\begin{eqnarray}
\label{FourierInversionRadial}
\int_{\BR^n} \phi(|\XI|)e^{i \langle \X, \XI \rangle} d\XI =\frac{(2\pi)^{\frac{n}{2}}}{|\X|^{\frac{n-2}{2}}} \int_{0}^{\infty} \phi(\rho) \rho^{\frac{n}{2}}J_{\frac{n}{2}-1}(\rho|\X|)d\rho
\end{eqnarray}
 underlying to radial functions $\psi(\XI)=\phi(|\XI|)$ (see also \cite[Theorem 3.3 of Chapter IV]{stein1971introduction}). Moreover, a simple calculation based on the change of variable $\rho\rightarrow \frac{\rho}{|\X|}$ on the right-hand side of (\ref{FourierInversionRadial}) shows that eq. (\ref{LevyDistribution}) can be cast in the form
\begin{eqnarray}
\label{LevyDistributionSpherical}
K_{\alpha,n}(\X,\tau)&=&\frac{1}{(2\pi)^{\frac{n}{2}}|\X|^{n}}\int_{0}^{\infty} e^{-\tau \left(\frac{\rho}{|\X|}\right)^{\alpha}}~\rho^{\frac{n}{2}}J_{\frac{n}{2}-1}(\rho)d\rho ~~~(\Re(\tau)\geq 0),
\end{eqnarray}
where $J_\nu$ denotes the $\nu-$th Bessel function (see \cite[Chapter 14]{george2013mathematical}).

We point out here that $K_{\alpha,n}(\X,\tau)$ corresponds to higher-dimensional generalization of the so-called L\'evy distribution of order $\alpha$ (cf.~\cite{bernstein2020fractional}). In the one-dimensional case, i.e. $\rho^{\frac{1}{2}}J_{-\frac{1}{2}}(\rho)=\sqrt{\frac{2}{\pi}}\cos(\rho)$ when $n=1$ (see~\cite[subsection 14.7]{george2013mathematical}), the integral representation (\ref{LevyDistribution}) has been tackled by renowned mathematicians such as F.~Bernstein \cite{bernstein1918fourierintegral}, P.~L\'evy \cite{levy1923application}, G.~Polya \cite{polya1923zeros}, W.R. Burwell \cite{burwell1924asymptotic} and D.V. Widder \cite{widder1979airy}.

In higher dimensions, the interest in studying the fundamental solution for fractional diffusion problems of type (\ref{FractionalHeat}) for values of $\alpha$ in the range $0<\alpha\leq 2$ has a vast literature (see e.g. \cite{caffarelli2007extension} and the references therein), especially when we are dealing with \textit{symmetric stable processes} of index $\alpha$ (cf. \cite{blumenthal1960stable,kolokoltsov2000symmetric}) in $\BR^n$ through the {\it positivity condition} $\displaystyle \varphi\geq 0 \Longrightarrow \exp(-t\left(-\Delta\right)^{\frac{\alpha}{2}})\varphi\geq 0$ of the diffusive semigroup $\displaystyle \left\{\exp(-t\left(-\Delta\right)^{\frac{\alpha}{2}})\right\}_{t\geq 0}$ of Markovian type.

In stark contrast, the cases where $\alpha>2$ -- corresponding to a fractional interpolation of higher-order heat equations (case of $\frac{\alpha}{2}\in \BN\setminus\{1\}$) -- are less understood (cf.~\cite{LiWong1993asymptotic}). In fact, the oscillator behavior of the Bessel functions appearing on the right-hand side of (\ref{LevyDistributionSpherical}) shows in turn that the {\it positivity condition} is no longer preserved for the  semigroup $\left\{\exp(-t\left(-\Delta\right)^{\frac{\alpha}{2}})\right\}_{t\geq 0}$ in case of $\alpha\geq 4$ and whence, the classical methods adopted for the second order heat equation do cannot be applied in this context. We refer to \cite{gazzola2010polyharmonic} for a wide overview of the problem and to \cite{ferreira2019eventual} for further advances beyond the biharmonic case (i.e. when $\frac{\alpha}{2}=2$) treated on the paper \cite{gazzola2008eventual}.

Accordingly to the literature, the optimal environment to deal with the biharmonic heat equation and fractional heat equation (\ref{FractionalHeat}) of order $\alpha>4$ can be provided by an exploitation of the notion of pseudo-Markov processes or even as composition of Brownian motions
or stable processes with Brownian motions. Those approaches has been popularized in the stochastic community since fundamental the papers of Hochberg (cf.~ \cite{hochberg1978signed}) and Funaki (cf. \cite{funaki1979probabilistic}), but we will not pursue these topics in the present paper.

On the other hand, we notice that the model problem involving the space-fractional heat equation (\ref{FractionalHeat}) only provides faithful generalizations for higher-order heat equations of even order (i.e. $\alpha=2m$, with $m\in\BN$). 
Also, on the multidimensional generalization of the L\'evy-Feller approach the lack of a closed form for the underlying generator semigroup is one of the major difficulties (cf.~\cite{hanyga2001multidimensional}). 

To the best of our knowledge, to derive faithful analogues of Airy like functions in higher dimensions, carrying higher-order heat operators of odd type (cf.~\cite{gorska2013higher}), one may consider the pseudodifferential reformulation of Dirac theory in the spirit of the seminal paper \cite{li1994clifford} of C.~Li, A.~McIntosh \& T.~Qian (see also ~\cite{mcintosh1996clifford} for a broad overview) in a way that the Dirac operator $\D$ is recasted in terms of the identity $\D=(-\Delta)^{\frac{1}{2}}\mathcal{H}$. Here and elsewhere, $\mathcal{H}$ stands for the Riesz-Hilbert transform.

The aim of this paper is to amalgate the study of the fundamental solutions of the polyharmonic heat-type $\partial_t+(-\Delta)^m$ ($m\in \BN$) and the higher-order Dirac-type operators $\partial_t\pm i\D^{2m+1}$ ($m\in \BN$) as well. To this end, we will investigate the fundamental solutions $\Phi_\alpha(\X,t;\theta)$
of the modified version of the space-fractional heat equation (\ref{FractionalHeat}) carrying the fractional Laplacian $-(-\Delta)^{\frac{\alpha}{2}}$ of order $\alpha$, with $2m\leq \alpha< 2m+2$ ($m\in \BN$):
\begin{eqnarray}
\label{FractionalDirac}
\left\{\begin{array}{lll}
\partial_t\Phi_\alpha(\X,t;\theta)=-(-\Delta)^{\frac{\alpha}{2}}\exp\left( \frac{i\pi\theta}{2} \mathcal{H}\right)\Phi_\alpha(\X,t;\theta)&,~\mbox{for} & (\X,t)\in \BR^n \times (0,\infty) \\ \ \\
\Phi_\alpha(\X,0;\theta)=\delta(\X) &,~\mbox{for} &\X \in \BR^n
\end{array}\right.
\end{eqnarray}

The evolution equation (\ref{FractionalDirac}) is build upon the Cauchy problem of L\'evy-Feller type considered by Gorenflo et al in \cite{gorenflo1999discrete}.
Here, the exponentiation operator 
$\exp\left( i\frac{\pi\theta}{2} \mathcal{H}\right)$ carrying the \textit{skewness parameter} $\theta$ in the range $|\theta|\leq \min\{\alpha-2m,2m+2-\alpha\}$
mimics the fractional Hilbert transform in optics introduced by Lohmann et al in \cite{lohmann1996fractional} (see also \cite[section 4]{bernstein2016fractional} and \cite[section 4]{bernstein2017fractional}).

The main gain of our model problem formulation agaisnt the multidimensional approaches available on the literature relies on the replacement of the parametrized measure on the unit sphere of $\BR^n$ -- hard to compute, in general (cf.~\cite{nolan2001estimation})-- by a rotation action in the plane parametrized by $\theta \mapsto \exp\left( \frac{i\pi\theta}{2} \mathcal{H}\right)$ -- easy to describe through the Fourier multiplier reformulation of $\mathcal{H}$ as $\mathcal{H}=\mathcal{F}^{-1} \frac{-i\XI}{|\XI|} \mathcal{F}$ (cf.~\cite[section 4]{bernstein2016fractional}).

\subsection{Layout of the paper and main results}

Let us summarize the layout of the paper: In Section \ref{Preliminaries} we establish the basic facts on Clifford algebras, Dirac operators and fractional Riesz-Hilbert-Type transforms following S.~Bernstein's footsteps (cf.~\cite{bernstein2016fractional,bernstein2017fractional}).
In Section \ref{MellinBarnesSection} we will obtain a Mellin-Barnes representation for the kernel function $K_{\alpha,n}(\X,\tau)$ in the spherical form (\ref{LevyDistributionSpherical}) and in Section \ref{ProofMainResults} we use the framework considered previously to investigate the fundamental solution associated to the Cauchy problem (\ref{FractionalDirac}).

Our first theorem to be proved in Subsection \ref{ProofMainResults1} shows that the fundamental solution $\Phi_\alpha(\X,t;\theta)$ of (\ref{FractionalDirac}) can be represented in terms of the kernel functions $K_{\alpha,n}(\X,te^{\pm i\frac{\pi\theta}{2}})$ represented through eq. (\ref{LevyDistribution}) [case of $\tau=te^{\pm i\frac{\pi\theta}{2}}$]:
\begin{theorem}\label{FundamentalSolutionDirac}
	The fundamental solution
	$\Phi_\alpha(\X,t;\theta)$ defined by the Cauchy-type problem (\ref{FractionalDirac}), where 
		\begin{eqnarray*}
		2m\leq \alpha< 2m+2 &\mbox{and} & |\theta|\leq \min\{\alpha-2m,2m+2-\alpha\} ~~~(m\in \BN)
	\end{eqnarray*} is equal to $$
\Phi_\alpha(\X,t;\theta)=\frac{1}{2}(I+\mathcal{H})K_{\alpha,n}(\X,te^{i\frac{\pi\theta}{2}})+\frac{1}{2}(I-\mathcal{H})K_{\alpha,n}(\X,te^{-i\frac{\pi\theta}{2}}).
	$$
\end{theorem}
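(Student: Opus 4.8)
The plan is to diagonalize the operator $\exp\left(\frac{i\pi\theta}{2}\mathcal{H}\right)$ through the Hardy-type projections and thereby split the Cauchy problem (\ref{FractionalDirac}) into two scalar fractional heat problems of the form (\ref{FractionalHeat}), each carrying a complex rescaling of the time variable. First I would record the algebraic identity $\mathcal{H}^2=I$ for the Riesz-Hilbert transform, to be established in Section \ref{Preliminaries}, and introduce the mutually annihilating idempotents $P_\pm=\frac{1}{2}(I\pm\mathcal{H})$, so that $P_++P_-=I$, $P_+-P_-=\mathcal{H}$, $P_\pm^2=P_\pm$ and $P_+P_-=P_-P_+=0$. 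Expanding the exponential series and collecting the even and odd powers of $\mathcal{H}$ then yields
\begin{eqnarray*}
\exp\left(\frac{i\pi\theta}{2}\mathcal{H}\right)=\cos\left(\frac{\pi\theta}{2}\right)I+i\sin\left(\frac{\pi\theta}{2}\right)\mathcal{H}=e^{i\pi\theta/2}P_++e^{-i\pi\theta/2}P_-,
\end{eqnarray*}
so that $\exp\left(\frac{i\pi\theta}{2}\mathcal{H}\right)P_\pm=e^{\pm i\pi\theta/2}P_\pm$.

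Since $(-\Delta)^{\frac{\alpha}{2}}$ and $\mathcal{H}$ are both Fourier multipliers — the former with the scalar symbol $|\XI|^\alpha$ — they commute; hence $(-\Delta)^{\frac{\alpha}{2}}$ commutes with $P_\pm$ and with $\exp\left(\frac{i\pi\theta}{2}\mathcal{H}\right)$. Applying $P_\pm$ to (\ref{FractionalDirac}) and setting $u_\pm:=P_\pm\Phi_\alpha$, the identities above turn the coupled equation into the two decoupled scalar Cauchy problems
\begin{eqnarray*}
\partial_t u_\pm(\X,t)=-e^{\pm i\pi\theta/2}(-\Delta)^{\frac{\alpha}{2}}u_\pm(\X,t),\qquad u_\pm(\X,0)=P_\pm\,\delta(\X).
\end{eqnarray*}
Passing to Fourier variables gives $\partial_t\widehat{u_\pm}(\XI,t)=-e^{\pm i\pi\theta/2}|\XI|^\alpha\,\widehat{u_\pm}(\XI,t)$ with $\widehat{u_\pm}(\XI,0)=\widehat{P_\pm\delta}(\XI)$, whose solution is $\widehat{u_\pm}(\XI,t)=e^{-t e^{\pm i\pi\theta/2}|\XI|^\alpha}\,\widehat{P_\pm\delta}(\XI)$. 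Comparing with the Fourier definition of $K_{\alpha,n}$ in (\ref{LevyDistribution}) and using that $P_\pm$ is itself a convolution operator, one recovers $u_\pm(\X,t)=P_\pm K_{\alpha,n}(\X,t e^{\pm i\pi\theta/2})$; the statement then follows from $\Phi_\alpha=(P_++P_-)\Phi_\alpha=u_++u_-$ upon substituting $P_\pm=\frac{1}{2}(I\pm\mathcal{H})$.

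The delicate point is not the algebra but the analytic meaning of $K_{\alpha,n}(\X,\tau)$ when $\tau=t e^{\pm i\pi\theta/2}$ sits on the boundary $\Re\tau\geq 0$. The constraint $|\theta|\leq\min\{\alpha-2m,2m+2-\alpha\}$ forces $|\theta|\leq 1$, whence $\Re\left(t e^{\pm i\pi\theta/2}\right)=t\cos\left(\frac{\pi\theta}{2}\right)\geq 0$ and the kernel is at worst a well-defined tempered distribution; but for $|\theta|=1$ the integral (\ref{LevyDistribution}) is merely conditionally convergent, so one must invoke the analytic continuation in $\tau$ together with the Mellin-Barnes representation of $K_{\alpha,n}$ derived in Section \ref{MellinBarnesSection} in order to legitimate the claim that $t\mapsto K_{\alpha,n}(\cdot,t e^{\pm i\pi\theta/2})$ does solve the scalar problem and is the unique such solution in the admissible class. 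I would therefore carry out the computation first at the level of Fourier symbols and only afterwards appeal to the regularity established in the earlier sections, both to pass back to $K_{\alpha,n}$ and to conclude uniqueness — which is where the full force of the admissible range of $(\alpha,\theta)$ enters.
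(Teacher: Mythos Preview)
Your proposal is correct and follows essentially the same route as the paper: the paper introduces the Fourier-side idempotents $\chi_\pm(\XI)=\frac{1}{2}\bigl(1\pm\frac{i\XI}{|\XI|}\bigr)$, which are precisely the symbols of your $P_\mp=\frac{1}{2}(I\mp\mathcal{H})$, uses the same diagonalization $\exp\bigl(\frac{i\pi\theta}{2}h(\XI)\bigr)=e^{i\pi\theta/2}\chi_-(\XI)+e^{-i\pi\theta/2}\chi_+(\XI)$, and then reads off $\mathcal{F}\Phi_\alpha=\chi_-(\XI)e^{-te^{i\pi\theta/2}|\XI|^\alpha}+\chi_+(\XI)e^{-te^{-i\pi\theta/2}|\XI|^\alpha}$ before inverting. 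The only difference is presentational---you split at the operator level and then pass to Fourier, whereas the paper stays on the Fourier side throughout---and your discussion of the boundary case $|\theta|=1$ is in fact more careful than what the paper provides.
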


\begin{remark}\label{thetaRemark2}
	We emphasize that the set of conditions
\begin{eqnarray*}
	2m\leq \alpha< 2m+2 &\mbox{and} & |\theta|\leq \min\{\alpha-2m,2m+2-\alpha\} ~~~(m\in \BN)
\end{eqnarray*}
lead to $-\frac{\pi}{2}\leq \frac{\pi \theta}{2}\leq \frac{\pi}{2}$, because of $\min\{\alpha-2m,2m+2-\alpha\}\leq 1$.
That is equivalent to say that $\Re(te^{\pm i\frac{\pi\theta}{2}} )=t\cos\left(\frac{\pi \theta}{2}\right)\geq 0$ is always fulfilled under the aforementioned conditions so that the kernel functions $K_{\alpha,n}(\X,te^{\pm i\frac{\pi\theta}{2}})$ appearing on the statement of {\bf Theorem \ref{FundamentalSolutionDirac}} are well defined.
\end{remark}

\begin{remark}\label{LiWongRemark}
In case of $\theta=0$ one can easily see that $
\Phi_\alpha(\X,t;0)$ coincides with the fundamental solution of the space-fractional heat equation (\ref{FractionalHeat}). In particular, from the choice $\alpha=2m$ one can moreover say that $
\Phi_{2m}(\X,t;0)$ approaches the fundamental solution of the higher-order heat operator $\partial_t+(-\Delta)^m$, already treated by X.~Li \& R.~Wong in \cite{LiWong1993asymptotic}.
\end{remark}

The second theorem to be proved in Subsection \ref{ProofMainResults1} is essentially a continuation of the proof of {\bf Theorem \ref{FundamentalSolutionDirac}} and combines analytic series expansions of Wright type (see \cite{kilbas2002generalized}) with the singular integral representation of the Riesz-Hilbert transform $\mathcal{H}$.  Its preparation starts in Section \ref{MellinBarnesSection} with the reformulation of the one-dimensional integral representation (\ref{LevyDistributionSpherical}) as a Mellin convolution integral that in turn permits us to represent $K_{\alpha,n}(\X,\tau)$ as a Wright series expansion
\begin{equation}
	\label{WrightSeriespq} {~}_1\Psi_1
	\left[\begin{array}{l|} (a_1,\alpha_1)   \\
		(b_1,\beta_1)
	\end{array} ~ \lambda \right]=\sum_{k=0}^\infty
	\dfrac{\Gamma(a_1+\alpha_1k)}{\Gamma(b_1+\beta_1
		k)}~\dfrac{\lambda^k}{k!},
\end{equation}
where $\lambda\in \BC$, $a_1,b_1\in \BC$ and $\alpha_1,\beta_1\in \BR\setminus\{0\}$.

In an overall view, such technique mimics the framework used by K.~G\'orska et al in the series of papers \cite{gorska2013exact,gorska2013higher} to derive, in one-dimensional case ($n=1$), \textit{signed L\'evy stable laws} and \textit{generalized Airy functions} as well. 

\begin{theorem}\label{FundamentalSolutionDiracSeries}
	Let $\Phi_\alpha(\X,t;\theta)$ be the kernel function determined in {\bf Theorem \ref{FundamentalSolutionDirac}.} for values of $\alpha$ and $\theta$ in the range
	\begin{center}
		$2m\leq \alpha< 2m+2$ and $ |\theta|\leq \min\{\alpha-2m,2m+2-\alpha\}$ ($m\in \BN$).
	\end{center} Then, we have the following:
	\begin{enumerate}
	\item $\Phi_\alpha(\X,t;\theta)$ admits the following singular integral representation 	\begin{eqnarray*}
		\Phi_\alpha(\X,t;\theta)&=&\displaystyle \frac{2^{1-n}}{\alpha \pi^{\frac{n}{2}}t^{\frac{n}{\alpha}}}~\Re\left(~e^{-\frac{i\pi \theta }{\alpha}\frac{n}{2}}~{~}_1\Psi_1
		\left[~\begin{array}{ll|} 
			(\frac{n}{\alpha},\frac{2}{\alpha})&  \\		
			(\frac{n}{2},1) & 
		\end{array} ~ -{\frac{|\X|^2\tau^{-\frac{2}{\alpha}}}{4}}e^{-\frac{i\pi \theta }{\alpha}} \right]\right)+\\
		&+&i\frac{2^{1-n}}{\alpha \pi^{\frac{n}{2}}t^{\frac{n}{\alpha}}}~\frac{\Gamma\left(\frac{n+1}{2}\right)}{\pi^{\frac{n+1}{2}}} P.V. \int_{\BR^n}~\Im\left(e^{-\frac{i\pi \theta }{\alpha}\frac{n}{2}}~{~}_1\Psi_1
		\left[~\begin{array}{ll|} 
			(\frac{n}{\alpha},\frac{2}{\alpha})&  \\		
			(\frac{n}{2},1) & 
		\end{array} ~ -{\frac{|\X-\Y|^2t^{-\frac{2}{\alpha}}}{4}e^{-\frac{i\pi \theta }{\alpha}}} \right]\right)\frac{\Y}{|\Y|^{n+1}}d\Y,
	\end{eqnarray*}
whereby $\Re(s)$ resp. $\Im(s)$ stands for the real resp. imaginary part of $s\in\BC$.\label{Statement1Theorem}
	\item In case of $\theta=0$, $\Phi_\alpha(\X,t;0)$ simplifies to
	\begin{eqnarray*}
	\Phi_\alpha(\X,t;0)=\frac{2^{1-n}}{\alpha \pi^{\frac{n}{2}}t^{\frac{n}{\alpha}}}~{~}_1\Psi_1
	\left[~\begin{array}{ll|} 
		(\frac{n}{\alpha},\frac{2}{\alpha})&  \\		
		(\frac{n}{2},1) & 
	\end{array} ~ -{\frac{|\X|^2t^{-\frac{2}{\alpha}}}{4}}\right].
	\end{eqnarray*}\label{Statement2Theorem}
	\end{enumerate}
\end{theorem}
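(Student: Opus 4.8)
The plan is to combine the closed form of \textbf{Theorem \ref{FundamentalSolutionDirac}} with the Wright-series representation of $K_{\alpha,n}$ obtained in Section \ref{MellinBarnesSection} and the singular-integral form of the Riesz--Hilbert transform $\mathcal{H}$ recalled in Section \ref{Preliminaries}. I would begin by recording the conjugation symmetry
$$K_{\alpha,n}(\X,\overline{\tau})=\overline{K_{\alpha,n}(\X,\tau)}\qquad (\Re(\tau)\geq 0),$$
immediate from (\ref{LevyDistribution}) after the change of variable $\XI\mapsto-\XI$ -- legitimate since $\Re(\tau)\geq 0$ forces absolute convergence and $\BR^n$ is symmetric about the origin. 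Abbreviating $K_\pm:=K_{\alpha,n}(\X,te^{\pm i\frac{\pi\theta}{2}})$, which are well defined by \textbf{Remark \ref{thetaRemark2}}, one has $K_-=\overline{K_+}$; using $\frac12(I+\mathcal{H})K_++\frac12(I-\mathcal{H})K_-=\frac12(K_++K_-)+\frac12\mathcal{H}(K_+-K_-)$ together with the $\BC$-linearity of $\mathcal{H}$, the expression in \textbf{Theorem \ref{FundamentalSolutionDirac}} collapses to
$$\Phi_\alpha(\X,t;\theta)=\Re(K_+)+i\,\mathcal{H}(\Im(K_+)).$$

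Next I would substitute the Wright-series form of the kernel furnished by Section \ref{MellinBarnesSection},
$$K_{\alpha,n}(\X,\tau)=\frac{2^{1-n}}{\alpha \pi^{\frac{n}{2}}\tau^{\frac{n}{\alpha}}}\;{~}_1\Psi_1\left[~\begin{array}{ll|}(\frac{n}{\alpha},\frac{2}{\alpha}) & \\ (\frac{n}{2},1) & \end{array}~ -\frac{|\X|^{2}\tau^{-\frac{2}{\alpha}}}{4}\right],$$
with the powers of $\tau$ taken in the principal branch; this holds for $\Re(\tau)>0$ and extends to $\Re(\tau)\geq 0$ by continuity. Specialising $\tau=te^{i\frac{\pi\theta}{2}}$ gives $\tau^{-\frac{n}{\alpha}}=t^{-\frac{n}{\alpha}}e^{-\frac{i\pi\theta}{\alpha}\frac{n}{2}}$ and $\tau^{-\frac{2}{\alpha}}=t^{-\frac{2}{\alpha}}e^{-\frac{i\pi\theta}{\alpha}}$, whence
$$K_+=\frac{2^{1-n}}{\alpha \pi^{\frac{n}{2}}t^{\frac{n}{\alpha}}}\;e^{-\frac{i\pi\theta}{\alpha}\frac{n}{2}}\;{~}_1\Psi_1\left[~\begin{array}{ll|}(\frac{n}{\alpha},\frac{2}{\alpha}) & \\ (\frac{n}{2},1) & \end{array}~ -\frac{|\X|^{2}t^{-\frac{2}{\alpha}}}{4}e^{-\frac{i\pi\theta}{\alpha}}\right].$$
Taking real parts of this identity matches the first summand in Statement \ref{Statement1Theorem} with $\Re(K_+)$, and taking imaginary parts shows that the function placed under the principal value in Statement \ref{Statement1Theorem}, evaluated at $\X-\Y$, equals $\Im(K_+)(\X-\Y)$.

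Finally I would write $\mathcal{H}$ as a singular integral, using the representation recalled in Section \ref{Preliminaries},
$$(\mathcal{H}f)(\X)=\frac{\Gamma\left(\frac{n+1}{2}\right)}{\pi^{\frac{n+1}{2}}}\; P.V.\int_{\BR^n}\frac{\Y}{|\Y|^{n+1}}\,f(\X-\Y)\,d\Y,$$
taken with $f=\Im(K_+)$ -- a function that is smooth in $\X$ (the Wright function depends real-analytically on its argument, which is polynomial in $\X$) and inherits the decay of $K_{\alpha,n}(\,\cdot\,,\tau)$ for $\Re(\tau)>0$, so that the principal value converges. Inserting the Wright-series form of $\Im(K_+)(\X-\Y)$ then reproduces exactly the second summand in Statement \ref{Statement1Theorem}, which establishes Statement \ref{Statement1Theorem}. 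Statement \ref{Statement2Theorem} follows at once: for $\theta=0$ the parameter $\tau=t$ is real, hence $K_+$ is real-valued, so $\Im(K_+)\equiv 0$ annihilates the singular integral, $\Re(K_+)=K_+$, and the factors $e^{-\frac{i\pi\theta}{\alpha}\frac{n}{2}}$ and $e^{-\frac{i\pi\theta}{\alpha}}$ both reduce to $1$, leaving the stated closed form.

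The step I expect to be the main obstacle is making the last paragraph fully rigorous: justifying that the pointwise singular-integral representation of $\mathcal{H}$ is applicable to $\Im(K_+)$ (and, if one wants a termwise series form of the answer, that $\mathcal{H}$ may be exchanged with the Wright summation). This requires precise control of ${~}_1\Psi_1$ -- equivalently, of $K_{\alpha,n}(\X,\tau)$ -- both near the origin and at infinity, where for non-even $\alpha$ only algebraic decay is available while for even $\alpha$ one has an oscillatory decay of Schwartz type. The asymptotic information needed is exactly that supplied by the Mellin--Barnes analysis of Section \ref{MellinBarnesSection}, so the argument stays internal to the paper.
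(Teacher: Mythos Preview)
Your proposal is correct and follows essentially the same route as the paper's own proof: rewrite the expression from \textbf{Theorem \ref{FundamentalSolutionDirac}} as $\Re(K_+)+i\,\mathcal{H}(\Im(K_+))$, substitute the Wright-series form of $K_{\alpha,n}$ from Section \ref{MellinBarnesSection} (specialised at $\tau=te^{i\pi\theta/2}$, exactly as in \textit{Remark \ref{RemarkStatements123}}), and invoke the singular-integral representation of $\mathcal{H}$ recalled just before the proof. The only difference is one of emphasis: you make the conjugation identity $K_{\alpha,n}(\X,\overline{\tau})=\overline{K_{\alpha,n}(\X,\tau)}$ explicit, whereas the paper leaves it implicit in its definitions of $\Re K$ and $\Im K$, and you flag the function-space justification for applying the pointwise $P.V.$ formula to $\Im(K_+)$, which the paper does not discuss.
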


\begin{remark}
	The fundamental solution $\Phi_{\alpha}(\X,t;\theta)$ provided by {\bf Theorem \ref{FundamentalSolutionDirac}} and {\bf Theorem \ref{FundamentalSolutionDiracSeries}} resembles to the integral representation considered by K. Gorska et al in \cite[Section 2]{gorska2013higher} to exploit the Airy integral transform, formely treated on D.V. Widder's paper \cite{widder1979airy}.

In particular, $\Phi_{2m}(\X,t;0)$ (see Statement \ref{Statement2Theorem} of {\bf Theorem \ref{FundamentalSolutionDiracSeries}}) resembles the 
	well-know \textit{symmetric L\'evy stable signed functions} $g_{2m}(u)$ (see \cite[Section 3]{gorska2013higher}).
	Interesting enough, the {\it real part of} $\Phi_{2m+1}(\X,t;\pm 1)$, that yields from the substitutions $\alpha=2m+1$ and $\theta=\pm 1$, may be considered as hypercomplex extensions of \textit{higher-order Airy functions} $g_{2m+1}^\pm(u)$ obtained in \cite[Section 4]{gorska2013higher}. 
\end{remark}

\section{Preliminaries}\label{Preliminaries}

\subsection{Clifford algebras and Dirac operators}

Let $\e_1,\e_2,\ldots,\e_n$ be an orthonormal basis of $\BR^n$ satisfying the graded anti-commuting relations
\begin{eqnarray}
\label{CliffordBasis}\e_j\e_k + \e_k\e_j =-2\delta_{jk}  &\mbox{for any}& j,k=1,2,\ldots,n,
\end{eqnarray}
and $\cl_{0,n}$ the universal Clifford algebra with 
signature $(0,n)$. 

The Clifford algebra $\cl_{0,n}$ is an associative algebra with identity $\e_\emptyset:=1$, containing $\BR$ and $\BR^n$ as subspaces.
In particular, vectors $\X=(x_1,x_2\ldots,x_n)$ and $\XI=(\xi_1,\xi_2,\ldots,\xi_n)$ of $\BR^n$ are represented in terms of the linear combinations 
\begin{eqnarray*}
	\X=\sum_{j=1}^n x_j \e_j&\mbox{resp.} & \XI=\sum_{j=1}^n \xi_j \e_j,
\end{eqnarray*}
whereas the Euclidean inner product $\langle \X,\XI \rangle$ between $\X,\XI \in \BR^n$:
\begin{eqnarray*}
	\langle \X,\XI \rangle=\sum_{j=1}^n x_j\xi_j \in \BR
\end{eqnarray*}
recasted in terms of anti-commutator $\displaystyle \langle \X,\XI \rangle=-\frac{1}{2}(\X\XI+\XI \X)$, belongs to the center of $\cl_{0,n}$. By the preceding relation it follows that the square $\XI^2$ is a real number satisfying $\XI^2=-|\XI|^2$, where $|\XI|:={\langle \XI,\XI \rangle}^{\frac{1}{2}}$ stands for the Euclidean norm in $\BR^n$.

Here we would like to stress that the linear space isomorphism given by the mapping
$\e_{j_1}\e_{j_2}\ldots \e_{j_r} \mapsto dx_{j_1}dx_{j_2}\ldots
dx_{j_r}$, with $1\leq j_1<j_2<\ldots<j_r\leq n$, enable us to establish an isomorphim between $\cl_{0,n}$ and the exterior algebra
$\displaystyle \bigwedge^* (\BR^n)=\bigoplus_{r=0}^n \bigwedge^r (\BR^n)$ (cf.~\cite[Chapter 4]{vaz2016introduction}) so that $\cl_{0,n}$ has dimension $2^n$.

In the sequel we always consider multivector functions $(\X,t)\mapsto \varPsi(\X,t)$ with values on the complexified Clifford algebra $\BC \otimes \cl_{0,n}$, written as
linear combinations in terms of the $r$-multivector basis
$\e_{j_1}\e_{j_2}\ldots \e_{j_r}$ labeled by the subsets
$J=\{j_1,j_2,\ldots,j_r\}$ of $\{ 1,2,\ldots,n\}$ i.e.
\begin{eqnarray}
\label{CliffordPsi}
\varPsi(\X,t)=\sum_{r=0}^n\sum_{|J|=r} \psi_J(\X,t) \e_J, &
	\mbox{with}~\e_{J}=\e_{j_1}\e_{j_2}\ldots \e_{j_r} &\mbox{and}~\psi_J(\X,t)\in \BC.
\end{eqnarray}

It is well know that the Dirac operator $\D$, defined as
\begin{eqnarray}
\label{DiracOp}\displaystyle \D\varPsi(\X,t)=\sum_{j=1}^{n}\e_j\partial_{x_j}\varPsi(\X,t),
\end{eqnarray}
factorizes the Euclidean Laplacian $\displaystyle \Delta=\sum_{j=1}^n \partial_{x_j}^2$, that is $\D^2=-\Delta$.
Further, induction over $m\in \BN$ shows us that
\begin{eqnarray}
\label{Dk}\D^{2m}=(-\Delta)^m &\mbox{and} & \D^{2m+1}=(-\Delta)^m\D.
\end{eqnarray}

\subsection{Background on function spaces, distributions and Fourier transform}\label{FourierTransformSection}

To extend the action of the Fourier transform  (\ref{FourierTransform}) and its inverse (\ref{FourierInverse}) by component wise application to Clifford-valued functions (\ref{CliffordPsi}) we need to consider the $\dag-${\it
	conjugation} operation $\varPsi(\X,t) \mapsto\varPsi(\X,t)^\dag$ on the \textit{complexified Clifford algebra} $\BC\otimes\cl_{0,n}$ defined as 
\begin{eqnarray}
\label{dagconjugation}
\begin{array}{lll}
(\Phi(\X,t) \varPsi(\X,t))^\dag=\varPsi(\X,t)^\dag\Phi(\X,t)^\dag \\ (\psi_J(\X,t) \e_J)^\dag =\overline{\psi_J(\X,t)}~\e_{j_r}^\dag
\ldots \e_{j_2}^\dag\e_{j_1}^\dag~~~(1\leq j_1<j_2<\ldots<j_r\leq n) \\
\e_j^\dag=-\e_j~~~(1\leq j\leq
n)
\end{array}.
\end{eqnarray}

Henceforth, the norm $\| \varPsi(\X,t)\|$ of the Clifford-valued-function (\ref{CliffordPsi}) induced by the identity $\| \varPsi(\X,t)\|^2=\varPsi(\X,t)^\dag \varPsi(\X,t)$ -- which is positive definite in the view of (\ref{dagconjugation}) and of the the anti-commuting relations (\ref{CliffordBasis})  --  allows us to define  
\begin{eqnarray}
\label{LpNorm} \|\varPsi(\cdot,t)\|_{L^p}:=\left\{\begin{array}{lll} \displaystyle \left(\int_{\BR^n} \|\varPsi(\X,t)\|^p d\X\right)^{\frac{1}{p}}, &\mbox{if} & 1\leq p<\infty \\ \ \\
\displaystyle \sup_{x \in \BR^n} \|\varPsi(\X,t)\|, &\mbox{if} & p=\infty 
\end{array}\right.
\end{eqnarray}
as the underlying norm of the {\it Bochner type spaces} $L^p(\BR^n;\BC\otimes\cl_{0,n})$ (cf.~\cite[subsection 1.2.b]{hytonen2016analysis}).

We then consider the multi-index abbreviations
\begin{eqnarray*}
	\X^{(\mu)}:=\left(x_1\right)^{\mu_1}\left(x_2\right)^{\mu_2} \ldots \left(x_n\right)^{\mu_n}, & \mbox{for}& \mu:=(\mu_1,\mu_2,\ldots,\mu_n) \in (\BN_0)^n \\
	\left(\partial_{\X}\right)^\nu:=\left(\partial_{x_1}\right)^{\nu_1}\ldots \left(\partial_{x_n}\right)^{\nu_n}, & \mbox{for}& \nu:=(\nu_1,\mu_2,\ldots,\nu_n) \in (\BN_0)^n \\
	|\mu|=\mu_1+\ldots+\mu_n,& \mbox{for}& \mu:=(\mu_1,\mu_2,\ldots,\mu_n) \in (\BN_0)^n \\
	|\nu|=\nu_1+\ldots+\nu_n,& \mbox{for}& \nu:=(\nu_1,\nu_2,\ldots,\nu_n) \in (\BN_0)^n.	
\end{eqnarray*}
 to exploit the definition of the Schwartz space $\mathcal{S}(\BR^n)$ and its dual $\mathcal{S}'(\BR^n)$ to Clifford-valued distributions in the spirit of \cite[subsections 2.4.c \& 2.4.d]{hytonen2016analysis}.

The space of \textit{rapidly decaying functions} $\varPsi(\cdot,t)$ with values on $\BC\otimes\cl_{0,n}$, denoted by $\mathcal{S}(\BR^n;\BC\otimes\cl_{0,n})$, is the Fr\'echet space defined by the family of semi-norm conditions underlying to the positive constant $M>0$:
$$ \displaystyle \sup_{\X \in \BR^n~;~|\mu|+|\nu|<M} ~\left\|\X^{(\mu)} \left(\partial_{\X}\right)^\nu \varPsi(\X,t)\right\|<\infty$$
whereas the space of $\BC\otimes\cl_{0,n}-$valued \textit{tempered distributions} $\mathcal{S}'(\BR^n;\BC\otimes\cl_{0,n})$ consists of \textit{continuous linear functionals} 
induced by the mapping $\varPsi(\cdot,t) \mapsto \langle \varPsi(\cdot,t),\Phi(\cdot,t)\rangle_{L^2}$, whereby
\begin{eqnarray}
\label{SesquilinarRn}\langle \Psi(\cdot,t),\Phi(\cdot,t
)\rangle_{L^2}:= \int_{\BR^n} \varPsi(\X,t)^\dag \Phi(\X,t) d\X
\end{eqnarray}
stands for the sesquilinear form (possibly Clifford-valued).

Notice that $\mathcal{S}(\BR^n;\BC\otimes\cl_{0,n})$ is a dense subset in $L^p(\BR^n;\BC\otimes\cl_{0,n})$ (cf.~\cite[Proposition 2.4.23.]{hytonen2016analysis}) for any $1\leq p<\infty$ (i.e. for values of $p\neq \infty$). On the other hand, the sesquilinear form (\ref{SesquilinarRn}) permits us not only to exploit the main features of the Fourier transform such as the mapping property $\mathcal{F}:L^1(\BR^n;\BC\otimes\cl_{0,n})\rightarrow L^\infty(\BR^n;\BC\otimes\cl_{0,n})$ (cf.~\cite[Definition 2.4.1]{hytonen2016analysis}) and the Plancherel theorem (cf.~\cite[Theorem 2.4.9]{hytonen2016analysis}) but also to extend the action of the Fourier transform (\ref{FourierTransform}) to an isomorphism in $\mathcal{S}'(\BR^n;\BC\otimes\cl_{0,n})$ as follows:
\begin{eqnarray*}\langle \mathcal{F}\varPsi(\cdot,t),\mathcal{F}\Phi(\cdot,t)\rangle_{L^2}=\langle \varPsi(\cdot,t),\Phi(\cdot,t)\rangle_{L^2},~~\mbox{for all}~~ & \varPsi(\cdot,t)\in \mathcal{S}(\BR^n;\BC\otimes\cl_{0,n})~\&~ &  \Phi(\cdot,t)\in \mathcal{S}'(\BR^n;\BC\otimes\cl_{0,n}).
\end{eqnarray*}

In particular, that allows us to show that for every $\varPsi(\cdot,t)$ with membership in $L^p(\BR^n;\BC\otimes\cl_{0,n})$, the family of \textit{continuous linear functionals} $\varPsi(\cdot,t) \mapsto \langle \varPsi(\cdot,t),\Phi(\cdot,t)\rangle_{L^2}$ always define an element of $\mathcal{S}(\BR^n;\BC\otimes\cl_{0,n})$ (see \cite[Example 2.4.26.]{hytonen2016analysis} for further details).

\begin{remark}\label{SchwartzClassRemark}
	Since $e^{-\tau|\XI|^\alpha}$ belongs to $\mathcal{S}'(\BR^n;\BC\otimes\cl_{0,n})$ for values of $\Re(\tau)\geq 0$ (see  {\it Remark \ref{thetaRemark2}}) one can thus infer from density arguments, depicted as above, that the kernel function $K_{\alpha,n}(\X,\tau)$ defined viz eq. (\ref{LevyDistribution}) belongs to $L^p(\BR^n;\BC\otimes\cl_{0,n})$, whenever $1\leq p<\infty$.
\end{remark}

\subsection{Background on Riesz-Hilbert-Type transforms}

Next we review the construction of the Hilbert transform-- the so-called fractional Riesz-Hilbert transform. Following the framework considered in the series of papers \cite{bernstein2016fractional,bernstein2017fractional} we derive the pseudo-differential representation of the so-called Riesz-Hilbert transform through the pseudo-differential identity
\begin{eqnarray}
\label{RieszHilbertTransform}
\D=(-\Delta)^{\frac{1}{2}}\mathcal{H},
\end{eqnarray}
where $(-\Delta)^{\frac{\alpha}{2}}=\mathcal{F}^{-1} |\XI|^{\alpha} \mathcal{F} $ denotes the fractional Riesz operator of order $\alpha$ (cf.~\cite[p.~483]{samko1993fractional}).

First, we note that in the view of the spectral identity
\begin{eqnarray*}
	\mathcal{F}(D \Psi)(\XI,t) =-i\XI \mathcal{F}\Psi(\XI,t), &~~\mbox{with}~~ & \XI=\sum_{j=1}^{n}\xi_j \e_j.
\end{eqnarray*} and of the spherical decomposition
\begin{eqnarray*}
	-i\XI=|\XI| \frac{-i\XI}{|\XI|}, &\mbox{with}&|\XI|=(-\XI^2)^{\frac{1}{2}}\neq 0
\end{eqnarray*}
 it is straightforward to see that $\mathcal{H}$ admits the pseudo-differential representation (cf.~\cite[section 4.]{bernstein2016fractional})
\begin{eqnarray}
\label{RieszHilbertPseudoDiff}
\mathcal{H}=\mathcal{F}^{-1} \frac{-i\XI}{|\XI|} \mathcal{F}
\end{eqnarray}

Next, we are going to derive the a close formula for the fractional Hilbert transform. To do so, we shall make use of  the formal series expansion
\begin{eqnarray}
\label{FractionalHilbertFormalSeries}
\exp\left( \frac{i\pi\theta}{2} \mathcal{H}\right)=\sum_{k=0}^{\infty} \frac{i^k\pi^k\theta^k}{2^kk!} \mathcal{H}^k
\end{eqnarray}

The next lemma corresponds to an abridged version of \cite[Definition 4.1]{bernstein2017fractional} (see eq. (4.2)) and \cite[Theorem 4.2.]{bernstein2016fractional} (see also \cite[Theorem 4.2.]{bernstein2017fractional}):
\begin{lemma}\label{FractionalHilbertLemma}
The fractional Riesz-Hilbert transform $\exp\left( \frac{i\pi\theta}{2} \mathcal{H}\right)$ admits the polar representation
\begin{eqnarray}
\label{FractionalHilbert}
\exp\left( \frac{i\pi\theta}{2} \mathcal{H}\right)=
\cos\left( \frac{\pi\theta}{2}\right)I+i\sin\left( \frac{\pi\theta}{2}\right)\mathcal{H}.	
\end{eqnarray}

Moreover, $$\exp\left( \frac{i\pi\theta}{2} \mathcal{H}\right):L^p(\BR^n;\BC\otimes\cl_{0,n})\rightarrow L^p(\BR^n;\BC\otimes\cl_{0,n})$$
is continuous and bounded for values of $1<p<\infty$.
\end{lemma}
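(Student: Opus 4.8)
The plan is to establish the polar (Euler-type) formula first by exploiting the algebraic fact that the Riesz-Hilbert transform squares to minus the identity, and then to deduce $L^p$-boundedness from the corresponding property of the Riesz transforms. For the first part, I would start from the Fourier multiplier representation (\ref{RieszHilbertPseudoDiff}), namely $\mathcal{H}=\mathcal{F}^{-1}\frac{-i\XI}{|\XI|}\mathcal{F}$, and compute the square of the symbol: since $\XI^2=-|\XI|^2$ one gets $\left(\frac{-i\XI}{|\XI|}\right)^2 = \frac{-\XI^2}{|\XI|^2}=-1$, hence $\mathcal{H}^2=-I$ as an operator on $L^2(\BR^n;\BC\otimes\cl_{0,n})$ (and on $\mathcal{S}(\BR^n;\BC\otimes\cl_{0,n})$). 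Substituting $\mathcal{H}^{2j}=(-1)^jI$ and $\mathcal{H}^{2j+1}=(-1)^j\mathcal{H}$ into the formal series (\ref{FractionalHilbertFormalSeries}) and regrouping the even- and odd-indexed terms yields
\begin{eqnarray*}
\exp\left(\frac{i\pi\theta}{2}\mathcal{H}\right)=\sum_{j=0}^{\infty}\frac{(-1)^j}{(2j)!}\left(\frac{\pi\theta}{2}\right)^{2j}I + i\sum_{j=0}^{\infty}\frac{(-1)^j}{(2j+1)!}\left(\frac{\pi\theta}{2}\right)^{2j+1}\mathcal{H},
\end{eqnarray*}
which is precisely $\cos\left(\frac{\pi\theta}{2}\right)I+i\sin\left(\frac{\pi\theta}{2}\right)\mathcal{H}$, giving (\ref{FractionalHilbert}).

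To make this rigorous rather than merely formal, I would interpret the series on the Fourier side: for fixed $\XI\neq 0$ the symbol of $\exp\left(\frac{i\pi\theta}{2}\mathcal{H}\right)$ is $\exp\left(\frac{i\pi\theta}{2}\frac{-i\XI}{|\XI|}\right)$, and because $\frac{-i\XI}{|\XI|}$ is a Clifford number whose square is $-1$ it generates a copy of $\BC$ inside $\BC\otimes\cl_{0,n}$; the ordinary scalar identity $e^{i\beta u}=\cos\beta + u\sin\beta$ valid whenever $u^2=-1$ then gives the symbol identity $\exp\left(\frac{i\pi\theta}{2}\frac{-i\XI}{|\XI|}\right)=\cos\left(\frac{\pi\theta}{2}\right)+i\sin\left(\frac{\pi\theta}{2}\right)\frac{-i\XI}{|\XI|}$, and applying $\mathcal{F}^{-1}(\cdot)\mathcal{F}$ produces (\ref{FractionalHilbert}). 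This also shows the series converges in operator norm on $L^2$, since the symbol is bounded uniformly in $\XI$.

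For the boundedness statement, I would write $\mathcal{H}=\mathcal{F}^{-1}\frac{-i\XI}{|\XI|}\mathcal{F}=\sum_{j=1}^{n}\e_j\left(\mathcal{F}^{-1}\frac{-i\xi_j}{|\XI|}\mathcal{F}\right)=\sum_{j=1}^{n}\e_j R_j$, where each $R_j$ is (up to the harmless sign convention) the $j$-th Riesz transform, which is a Calder\'on–Zygmund operator bounded on $L^p(\BR^n)$ for $1<p<\infty$; acting componentwise on the $2^n$ scalar coordinates of a $\BC\otimes\cl_{0,n}$-valued function and using the equivalence of $\|\cdot\|$ with the $\ell^2$-norm of the coordinates (from (\ref{dagconjugation}) and (\ref{CliffordBasis})), $\mathcal{H}$ is bounded on $L^p(\BR^n;\BC\otimes\cl_{0,n})$ for $1<p<\infty$. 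Combining this with (\ref{FractionalHilbert}) and the triangle inequality, $\exp\left(\frac{i\pi\theta}{2}\mathcal{H}\right)$ is bounded on $L^p$ with norm at most $|\cos(\frac{\pi\theta}{2})|+|\sin(\frac{\pi\theta}{2})|\,\|\mathcal{H}\|_{L^p\to L^p}$, and continuity is immediate since it is linear and bounded. The main obstacle I anticipate is purely technical: justifying that the formal series (\ref{FractionalHilbertFormalSeries}) can legitimately be rearranged and summed as an operator identity on the relevant function spaces — this is where passing to the Fourier multiplier side and invoking the uniform boundedness of the symbol is essential, rather than manipulating unbounded operators term by term; everything else reduces to the classical $L^p$ theory of Riesz transforms and elementary Clifford algebra.
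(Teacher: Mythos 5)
Your overall strategy is exactly the paper's: pass to the Fourier multiplier side, expand the exponential series, regroup even and odd powers of the symbol, and reduce $L^p$-boundedness to that of the Riesz transforms. However, there is a genuine sign error at the central algebraic step. You claim that the symbol squares to $-1$ and hence $\mathcal{H}^2=-I$, computing $\left(\frac{-i\XI}{|\XI|}\right)^2=\frac{-\XI^2}{|\XI|^2}=-1$. But since $\XI^2=-|\XI|^2$ in $\cl_{0,n}$, one has $-\XI^2=+|\XI|^2$, so in fact
\begin{equation*}
\left(\frac{-i\XI}{|\XI|}\right)^2=\frac{(-i)^2\,\XI^2}{|\XI|^2}=\frac{(-1)(-|\XI|^2)}{|\XI|^2}=+1,
\end{equation*}
i.e.\ $h(\XI)=\frac{-i\XI}{|\XI|}$ is a \emph{unit} Clifford vector with $h(\XI)^2=+1$ and $\mathcal{H}^2=+I$ (this is precisely what the paper uses; the familiar identity $H^2=-I$ for the scalar one-dimensional Hilbert transform does not carry over here because of the $\e_j$ factors). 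Your subsequent steps are inconsistent with your own claim: substituting $\mathcal{H}^{2j}=(-1)^jI$ into the series, the factor $i^{2j}=(-1)^j$ from $\left(\frac{i\pi\theta}{2}\right)^{2j}$ would cancel it and produce $\cosh\left(\frac{\pi\theta}{2}\right)I+i\sinh\left(\frac{\pi\theta}{2}\right)\mathcal{H}$, not the trigonometric polar form. Likewise the scalar identity you invoke, $e^{i\beta u}=\cos\beta+u\sin\beta$ for $u^2=-1$, is false (for $u^2=-1$ one has $e^{i\beta u}=\cosh\beta+iu\sinh\beta$); the identity actually needed is $e^{i\beta v}=\cos\beta+iv\sin\beta$ for $v^2=+1$, since then $(iv)^2=-1$. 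With $v=h(\XI)$ this gives the correct multiplier identity and hence (\ref{FractionalHilbert}).

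The remainder of your argument — decomposing $\mathcal{H}=\sum_{j=1}^{n}\e_jR_j$, invoking the classical $L^p$-boundedness of the Riesz transforms for $1<p<\infty$, acting componentwise on the $2^n$ coordinates, and then bounding $\exp\left(\frac{i\pi\theta}{2}\mathcal{H}\right)$ via (\ref{FractionalHilbert}) and the triangle inequality — is sound and in fact more explicit than the paper, which simply defers to Bernstein's Theorem 4.2. Once the sign of $h(\XI)^2$ is corrected, your proof goes through and coincides with the paper's.
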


\begin{proof}
From the formal series expansion (\ref{FractionalHilbertFormalSeries}) of $\exp\left( \frac{i\pi\theta}{2} \mathcal{H}\right)$
one can infer that 

\begin{eqnarray}
\label{FractionalHilbertMultiplier}\mathcal{F}\left[\exp\left( \frac{i\pi\theta}{2} \mathcal{H}\right)\Psi(\cdot,t)\right](\XI)= \exp\left( \frac{i\pi\theta}{2} h(\XI)\right)\mathcal{F}\Psi(\XI,t),
\end{eqnarray}
where $h(\XI):=\frac{-i\XI}{|\XI|}$ denotes the Fourier multiplier of $\mathcal{H}$.

Here, we recall that $h(\XI)$ arising on the exponentiation representation $\exp\left( \frac{i\pi\theta}{2} h(\XI)\right)$ is a unitary Clifford vector, i.e. $\left(~h(\XI)~\right)^{2}=1$. By expanding now $\exp\left( \frac{i\pi\theta}{2} h(\XI)\right)$ as a formal series expansion, the set of recursive relations 
\begin{eqnarray*}
\left(~h(\XI)~\right)^{2j}=1 & \mbox{and} & \left(~h(\XI)~\right)^{2j+1}=h(\XI)
\end{eqnarray*}
yielding from induction over $j$, leads to the splitting formula
\begin{eqnarray*}
\exp\left( \frac{i\pi\theta}{2} h(\XI)\right)
	&=&\sum_{j=0}^\infty \frac{(i\pi)^{2j}\theta^{2j}}{2^{2j}(2j)!} \left(~h(\XI)~\right)^{2j}+\sum_{j=0}^\infty \frac{(i\pi)^{2j+1}\theta^{2j+1}}{2^{2j+1}(2j+1)!} \left(~h(\XI)~\right)^{2j+1} \\
	&=&\sum_{j=0}^\infty \frac{\left(-1\right)^{j}\pi^{2j}\theta^{2j}}{2^{2j}(2j)!} +ih(\XI)\sum_{j=0}^\infty \frac{\left(-1\right)^{j}\pi^{2j+1}\theta^{2j+1}}{2^{2j+1}(2j+1)!}  \\
\end{eqnarray*}
which is equivalent to 
\begin{eqnarray*}
\exp\left( \frac{\pi\theta}{2} \frac{\XI}{|\XI|}\right)&=&\cos\left( \frac{\pi\theta}{2}\right)+\frac{\XI}{|\XI|}\sin\left( \frac{\pi\theta}{2}\right).
\end{eqnarray*}

Finally, by taking the inverse of the Fourier transform (\ref{FourierInverse}) on both sides of (\ref{FractionalHilbertMultiplier}) the one gets from (\ref{RieszHilbertPseudoDiff}) the eq. (\ref{FractionalHilbert}), as expected.

Moreover, the proof of continuity and boundedness of $\exp\left( \frac{i\pi\theta}{2} \mathcal{H}\right):L^p(\BR^n;\BC\otimes\cl_{0,n})\rightarrow L^p(\BR^n;\BC\otimes\cl_{0,n})$ for values of $1<p<\infty$ is, up to a linearity argument, similar to the proof of \cite[Theorem 4.2.]{bernstein2016fractional}.
\end{proof}

\section{Mellin-Barnes and Wright series representations of the Fundamental solution}\label{MellinBarnesSection}

\subsection{Mellin-Barnes representation}

Following the framework considered in \cite{gorska2013exact,gorska2013higher}, we start to to reformulate $K_{\alpha,n}(\X,\tau)$ represented through eq. (\ref{LevyDistributionSpherical}) as a Mellin convolution at the point $r=|\X|\tau^{-\frac{1}{\alpha}}$ (cf.~\cite[Section 4.]{ButJ97}) namely:
\begin{eqnarray*}
	K_{\alpha,n}(\X,\tau)&=&\frac{1}{(2\pi)^{\frac{n}{2}}|\X|^n}\int_{0}^{\infty}f\left(\frac{|\X|\tau^{-\frac{1}{\alpha}}}{\rho}\right)g(\rho)\frac{d\rho}{\rho},
\end{eqnarray*}
with 
\begin{eqnarray*}
	f(\rho)=e^{-\rho^{-\alpha}} 
	&\mbox{and}& g(\rho)=\rho^{\frac{n}{2}+1}J_{\frac{n}{2}-1}(\rho).
\end{eqnarray*}

Furthermore, by exploiting the Mellin convolution theorem (see eq.~\cite[Theorem 3.]{ButJ97}), we also have that
\begin{eqnarray}
\label{MellinFG}
	\mathcal{M}\{K_{\alpha,n}(\X,\tau)\}(s)
	&=&\frac{1}{(2\pi)^{\frac{n}{2}}|\X|^n}\mathcal{M}\left\{f\left(|\X|\tau^{-\frac{1}{\alpha}}\right)\right\}(s)~.~\mathcal{M}\left\{g\left(|\X|\tau^{-\frac{1}{\alpha}}\right)\right\}(s),
\end{eqnarray}
whereby
\begin{eqnarray}
	\label{MellinT}
	\mathcal{M}\{\phi(\rho)\}(s)=\int_{0}^{\infty} \phi(\rho)\rho^{s-1}d\rho, &\mbox{with}& s\in\BC,
\end{eqnarray} stands for the Mellin transform (cf.~\cite[eq.(1.2) of p. 326]{ButJ97}).

The next step thus consists in computing the Mellin transforms $\mathcal{M}\{f(\rho)\}(s)$ and $\mathcal{M}\{g(\rho)\}(s)$ and then inverting the product $\mathcal{M}\{f(\rho)\}(s).\mathcal{M}\{g(\rho)\}(s)$ at $\rho=|\X|\tau^{-\frac{1}{\alpha}}$ by means of the Mellin inversion formula (cf.~\cite[eq. (1.3) of p.326]{ButJ97})
\begin{eqnarray}
	\label{MellinInv}	\phi(\rho)=\frac{1}{2\pi i}\int_{c-i\infty}^{c+i\infty}\mathcal{M}\{\phi(\rho)\}(s)~\rho^{-s}~ds, & \mbox{with} & \rho>0 ~~~\&~~ c=\Re(s).
\end{eqnarray}
 Thereby, from a straightforwardly application of property \begin{eqnarray*}
	\label{MellinP}	\mathcal{M}\{\rho^\beta \phi(\kappa \rho^\gamma)\}(s)=\frac{1}{|\gamma|}\kappa^{-\frac{s+\beta}{\gamma}}\mathcal{M}\{\phi(\rho)\}\left(\frac{s+\beta}{\gamma}\right)
\end{eqnarray*}
carrying the parameters $\beta\in \BC,~\gamma \in \BC\setminus \{0\}$ and $\kappa>0$ (cf.~\cite[Proposition 1.]{ButJ97}), there holds
\begin{eqnarray*}
\mathcal{M}\{f(\rho)\}(s)&=&\frac{1}{|-\alpha|}\mathcal{M}\{e^{-\rho}\}\left(-\frac{s}{\alpha}\right)\\
&=&\frac{1}{\alpha}\Gamma\left(-\frac{s}{\alpha}\right) \\ \ \\
\mathcal{M}\{g(\rho)\}(s)&=&\mathcal{M}\left\{J_{\frac{n}{2}-1}\left({\rho}\right)\right\}\left(s+\frac{n}{2}+1\right)\\
&=&{2^{\frac{n}{2}+s}}\frac{\Gamma\left(\frac{n}{2}+\frac{s}{2}\right)}{\Gamma\left(-\frac{s}{2}\right)}
\end{eqnarray*}
so that (\ref{MellinFG}) equals
\begin{eqnarray}
\label{MellinKernel}
	\mathcal{M}\{K_{\alpha,n}(\X,\tau)\}(s)=\frac{2^s}{\alpha \pi^{\frac{n}{2}}|\X|^n}\frac{\Gamma\left(\frac{n}{2}+\frac{s}{2}\right)\Gamma\left(-\frac{s}{\alpha}\right)}{\Gamma\left(-\frac{s}{2}\right)}.
\end{eqnarray}

On computations above we have used the Eulerian representation of the Gamma function (see \cite[Chapter 13]{george2013mathematical})
\begin{eqnarray}
\label{GammaInt}\int_{0}^{\infty} e^{-\rho}\rho^{s-1}d\rho=\Gamma(s)
\end{eqnarray}
and the Weber integral representation (cf.~\cite[p. 490, eq. (25.27)]{samko1993fractional}) carrying the parameters $\beta=\frac{n}{2}+s$ and $\nu=\frac{n}{2}-1$ (see also \cite[p.~57, eq. (2.46)]{mathai2009h}):
\begin{eqnarray}
\label{WeberIntegral}
\int_{0}^{\infty} \rho^\beta J_\nu(\rho) d\rho=2^\beta~ \frac{\Gamma\left(\frac{\nu+\beta+1}{2}\right)}{\Gamma\left(\frac{\nu-\beta+1}{2}\right)},& \mbox{for}& -\Re(\nu)-1<\Re(\beta)<\frac{1}{2}
\end{eqnarray}

As a consequence of (\ref{MellinFG}) and of the Mellin inversion formula (\ref{MellinInv}) we conclude that for $\rho=|\X|\tau^{-\frac{1}{\alpha}}$
\begin{eqnarray}
\label{LevyMellinBarnes}
	\begin{array}{lll}
		K_{\alpha,n}(\X,\tau)&=&\displaystyle \frac{1}{2\pi i}\int_{c-i\infty}^{c+i\infty} 	\mathcal{M}\{K_{\alpha,n}(\X,\tau)\}(s)~ \left(|\X|\tau^{-\frac{1}{\alpha}}\right)^{-s} ds\\ \ \\
		&=&\displaystyle \frac{1}{\alpha \pi^{\frac{n}{2}}|\X|^n}~\frac{1}{2\pi i}\int_{c-i\infty}^{c+i\infty} \frac{\Gamma\left(\frac{n}{2}+\frac{s}{2}\right)\Gamma\left(-\frac{s}{\alpha}\right)}{\Gamma\left(-\frac{s}{2}\right)}~ \left({\frac{|\X|\tau^{-\frac{1}{\alpha}}}{2}}\right)^{-s}ds,
	\end{array}
\end{eqnarray} 
or equivalently,
$$
K_{\alpha,n}(\X,\tau)=\frac{1}{\alpha \pi^{\frac{n}{2}}|\X|^n}H^{1,1}_{1,2}
\left[~{\frac{|\X|\tau^{-\frac{1}{\alpha}}}{2}} ~\begin{array}{|ll} 
(1,\frac{1}{\alpha})&  \\ \ \\ 	
(\frac{n}{2},\frac{1}{2}) & (1,\frac{1}{2}) 
\end{array} ~ \right]
$$
using the H-function notation (\cite[p.~2]{mathai2009h} \& \cite[eq. (5.1)]{kilbas2002generalized}).

By applying general existence conditions for the H-function  we conclude that, in case of $\alpha>1$, the integral representation (\ref{LevyMellinBarnes}) yields a uniformly convergent series expansion -- conditions $\mu>0$ and $q\geq 1$ carrying the parameters $\mu=\frac{1}{2}+\frac{1}{2}-\frac{1}{\alpha}$ and $q=2$ (cf.~\cite[Case 1: of Theorem 1.1.]{mathai2009h}).

\subsection{Series Expansion of Wright type}

To represent explicitly $K_{\alpha,n}(\X,\tau)$ as a convergent series expansion of Wright type, one transform the closed path joining the endpoints $c-i\infty$ and $c+i\infty$ associated to the straight line $\Re(s)=c$ ($-n<c<\frac{1-n}{2}$) as a loop beginning and ending at $-\infty$ and enriching all the simple poles $s=-n-2k$ ($k\in\BN_0$).

Using the fact that for values of $\alpha>1$ the intersection of the simple poles of  $\Gamma\left(\frac{n}{2}-s\right)$ and $\Gamma\left(-\frac{s}{\alpha}\right)$ yields an empty set ($\alpha k\neq -n-2k$ for all $k\in\BN_0$), there holds  by a straightforward aplication of the standard residue theorem, we can evaluate the contour integral appearing on the right-hand side of (\ref{LevyMellinBarnes}) as an infinite sum of the residues at $s=-n-2k$ ($k\in\BN_0$).

 Namely, we have
\begin{eqnarray*}
	K_{\alpha,n}(\X,\tau)&=&\frac{1}{\alpha \pi^{\frac{n}{2}}|\X|^n}\sum_{k=0}^\infty \lim_{s\rightarrow
		-n-2k}(s+n+2k) \frac{\Gamma\left(\frac{n}{2}+\frac{s}{2}\right)\Gamma\left(-\frac{s}{\alpha}\right)}{\Gamma\left(-\frac{s}{2}\right)}~ \left({\frac{|\X|\tau^{-\frac{1}{\alpha}}}{2}}\right)^{-s} \\
	&=& 
	\frac{1}{\alpha \pi^{\frac{n}{2}}|\X|^n}\sum_{k=0}^\infty \lim_{s\rightarrow
		-n-2k}
	\left(\frac{n}{2}+\frac{s}{2}+k\right)\Gamma\left(\frac{n}{2}+\frac{s}{2}\right)~\frac{2\Gamma\left(-\frac{s}{\alpha}\right)}{\Gamma\left(-\frac{s}{2}\right)}~ \left({\frac{|\X|^2\tau^{-\frac{2}{\alpha}}}{4}}\right)^{-\frac{s}{2}}   \\
	&=&	\frac{1}{\alpha \pi^{\frac{n}{2}}|\X|^n}\sum_{k=0}^\infty \frac{(-1)^k}{k!} ~ ~\frac{2\Gamma\left(\frac{n}{\alpha}+\frac{2}{\alpha}k\right)}{\Gamma\left(\frac{n}{2}+k\right)}~ \left({\frac{|\X|^2\tau^{-\frac{2}{\alpha}}}{4}}\right)^{\frac{n}{2}+k}.
\end{eqnarray*}

After a straightforward simplification we recognize that, for values of $\alpha>1$, the kernel function $K_{\alpha,n}(\X,\tau)$ reduces to a Wright series expansion of the type (\ref{WrightSeriespq}). Namely, one gets the closed representation
\begin{eqnarray}
\label{LevyDistributionWright11}
	K_{\alpha,n}(\X,\tau)=\frac{2^{1-n}}{\alpha \pi^{\frac{n}{2}}\tau^{\frac{n}{\alpha}}}~{~}_1\Psi_1
\left[~\begin{array}{ll|} 
(\frac{n}{\alpha},\frac{2}{\alpha})&  \\		
(\frac{n}{2},1) & 
\end{array} ~ -{\frac{|\X|^2\tau^{-\frac{2}{\alpha}}}{4}} \right].
\end{eqnarray}

\begin{remark}\label{RemarkStatements123}
	From the substitution $\tau=te^{i\frac{\pi\theta}{2}}$ on (\ref{LevyDistributionWright11}). it thus follows then
	\begin{eqnarray*}
		\displaystyle K_{\alpha,n}(\X,te^{i\frac{\pi\theta}{2}})&=&\frac{2^{1-n}}{\alpha \pi^{\frac{n}{2}}t^{\frac{n}{\alpha}}}~e^{-\frac{i\pi \theta }{\alpha}\frac{n}{2}}~{~}_1\Psi_1
		\left[~\begin{array}{ll|} 
			(\frac{n}{\alpha},\frac{2}{\alpha})&  \\		
			(\frac{n}{2},1) & 
		\end{array} ~ -{\frac{|\X|^2\tau^{-\frac{2}{\alpha}}}{4}}e^{-\frac{i\pi \theta }{\alpha}} \right]\\
		&=&\frac{2^{1-n}}{\alpha \pi^{\frac{n}{2}}t^{\frac{n}{\alpha}}}\sum_{k=0}^\infty  ~ ~\frac{\Gamma\left(\frac{n}{\alpha}+\frac{2}{\alpha}k\right)}{\Gamma\left(\frac{n}{2}+k\right)}~ \frac{\left(-{\frac{|\X|^2\tau^{-\frac{2}{\alpha}}}{4}}\right)^{k}e^{-i\frac{\pi \theta}{\alpha}\left(\frac{n}{2}+k\right)}}{k!}.
	\end{eqnarray*}
\end{remark}

\begin{remark}
	For $\alpha=2$ and $\tau=t$ one has that $	K_{2,n}(\X,t)$, determined as above, simplifies to 
	$$K_{2,n}(\X,t)=\frac{1}{\left(4\pi t\right)^{\frac{n}{2}}}~e^{-{\frac{|\X|^2}{4t}}},$$
	the so-called \textit{heat kernel}.
\end{remark}

\section{Proof of the Main Results}\label{ProofMainResults}

\subsection{Proof of Theorem \ref{FundamentalSolutionDirac}}\label{ProofMainResults1}

To proof {\bf Theorem \ref{FundamentalSolutionDirac}} one has to consider, as in the refs. \cite{li1994clifford,mcintosh1996clifford}, the projection operators $\chi_\pm(\XI)$ ($\XI \neq 0$) defined viz 
\begin{eqnarray}
\label{ProjectionOp}
	\chi_\pm(\XI)=\frac{1}{2}\left(1\pm  \frac{i\XI}{|\XI|}\right)
\end{eqnarray}
corresponding to the Fourier multipliers of $\frac{1}{2}(I\mp\mathcal{H}):L^p(\BR^n;\BC\otimes\cl_{0,n})\rightarrow L^p(\BR^n;\BC\otimes\cl_{0,n})$ ($1<p<\infty$). See also 

Indeed, the set of properties 
\begin{eqnarray}
\label{ProjectionOpProperties}
	\chi_+(\XI)+\chi_-(\XI)=1,~~ & \left(\chi_\pm(\XI)\right)^2=\chi_\pm(\XI) &~~\mbox{and}~~\chi_+(\XI)\chi_-(\XI)=0=\chi_-(\XI)\chi_+(\XI)
\end{eqnarray}
yield straightforwardly from the fact that the Fourier multiplier $h(\XI):=\frac{-i\XI}{|\XI|}$ of $\mathcal{H}$ is a unitary Clifford-vector, i.e. $\left(~h(\XI)~\right)^{2}=1$ (cf.~\cite[p. 671]{li1994clifford} \& \cite[subsection 5.2.1]{mcintosh1996clifford}), whereas in the view of (\ref{ProjectionOp}) and of (\ref{RieszHilbertPseudoDiff})
\begin{eqnarray}
\label{ProjectionOpHilbert}
\begin{array}{ccc}
\frac{1}{2}(I+\mathcal{H})&=&\mathcal{F}^{-1} \chi_-(\XI) \mathcal{F} \\ \frac{1}{2}(I-\mathcal{H})&=&\mathcal{F}^{-1} \chi_+(\XI)  \mathcal{F}.
\end{array}
\end{eqnarray} 

\begin{proof}({\bf Theorem \ref{FundamentalSolutionDirac}})
First note that in the view of {\bf Lemma \ref{FractionalHilbertLemma}}, the Cauchy problem (\ref{FractionalDirac}) on the Fourier space takes the form
\begin{eqnarray*}
\left\{\begin{array}{lll}
 \partial_t	\mathcal{F}(\Phi_\alpha(\cdot,t;\theta))(\XI)=-|\XI|^\alpha\left(\cos\left( \frac{\pi\theta}{2} \right)+\frac{\XI}{|\XI|}\sin\left( \frac{\pi\theta}{2} \right)\right)	\mathcal{F}(\Phi_\alpha(\cdot,t;\theta))(\XI)&,~\mbox{for} & (\XI,t)\in \BR^n \times (0,\infty) \\ \ \\
\mathcal{F}(\Phi_\alpha(\cdot,t;\theta))(\XI)=1 &,~\mbox{for} &\XI \in \BR^n
\end{array}\right.
\end{eqnarray*}
which, in terms of the projection operators $\chi_\pm(\XI)$ defined viz eq. (\ref{ProjectionOp}), may be reformulated as 

\begin{eqnarray}
\label{FractionalDiracFourier}
\left\{\begin{array}{lll}
	\partial_t	\mathcal{F}(\Phi_\alpha(\cdot,t;\theta))(\XI)=-|\XI|^\alpha\left(e^{i\frac{\pi\theta}{2}}\chi_-(\XI)+e^{-i\frac{\pi\theta}{2}}\chi_+(\XI)\right)&,~\mbox{for} & (\XI,t)\in \BR^n \times (0,\infty) \\ \ \\
\mathcal{F}(\Phi_\alpha(\cdot,t;\theta))(\XI)=1 &,~\mbox{for} &\XI \in \BR^n
\end{array}\right.
\end{eqnarray}

Thus, a solution of (\ref{FractionalDiracFourier}) is provided, for values of $t\geq 0$, by the formal exponentiation formula
\begin{eqnarray}
\label{FractionalDiracFourierFormalSolution}
\begin{array}{ccc}
\mathcal{F}(\Phi_\alpha(\cdot,t;\theta))&=&\exp\left(-t|\XI|^\alpha \left(e^{i\frac{\pi\theta}{2}}\chi_-(\XI)+e^{-i\frac{\pi\theta}{2}}\chi_+(\XI)\right)\right) \\
&=&\displaystyle \sum_{k=0}^\infty \frac{(-1)^kt^k |\XI|^{\alpha k}}{k!}\left(e^{i\frac{\pi\theta}{2}}\chi_-(\XI)+e^{-i\frac{\pi\theta}{2}}\chi_+(\XI)\right)^k.
\end{array}
\end{eqnarray}

Application of the projection properties (\ref{ProjectionOpProperties}) carrying the idempotents $\chi_\pm(\XI)$ results, after a straightforwardly computation based on induction arguments, into the identity
\begin{eqnarray*}
\left(e^{i\frac{\pi\theta}{2}}\chi_-(\XI)+e^{-i\frac{\pi\theta}{2}}\chi_+(\XI)\right)^k=e^{i\frac{\pi\theta k}{2}}\chi_-(\XI)+e^{-i\frac{\pi\theta k}{2}}\chi_+(\XI), &\mbox{for all}&k\in\BN_0.
\end{eqnarray*}

Thus, (\ref{FractionalDiracFourierFormalSolution}) simplifies to
\begin{eqnarray*}
	\mathcal{F}(\Phi_\alpha(\cdot,t;\theta)) 
	&=& \sum_{k=0}^\infty \frac{(-1)^kt^k |\XI|^{\alpha k}}{k!}\left(e^{i\frac{\pi\theta k}{2}}\chi_-(\XI)+e^{-i\frac{\pi\theta k}{2}}\chi_+(\XI)\right) \\
	&=&\chi_-(\XI)\exp\left(-te^{i\frac{\pi\theta}{2}}|\XI|^\alpha \right)+\chi_+(\XI)\exp\left(-te^{-i\frac{\pi\theta}{2}}|\XI|^\alpha \right).
\end{eqnarray*}

Now taking the inverse $\mathcal{F}^{-1}$ of the Fourier transform (see eq. (\ref{FourierInverse})) on both sides of the preceding equation, there holds from  eqs. (\ref{LevyDistribution}) and (\ref{ProjectionOpHilbert})
\begin{eqnarray*}
\Phi_\alpha(\X,t;\theta)&=&\frac{1}{(2\pi)^n}\int_{\BR^n} \chi_-(\XI)\exp\left(-te^{i\frac{\pi\theta}{2}}|\XI|^\alpha \right)~e^{i \langle \X,\XI \rangle}d\XI+\frac{1}{(2\pi)^n}\int_{\BR^n} \chi_+(\XI)\exp\left(-te^{-i\frac{\pi\theta}{2}}|\XI|^\alpha \right)~e^{i \langle \X,\XI \rangle}d\XI \\
&=&\frac{1}{2}(I+\mathcal{H})K_{\alpha,n}(\X,te^{i\frac{\pi\theta}{2}})+\frac{1}{2}(I-\mathcal{H})K_{\alpha,n}(\X,te^{-i\frac{\pi\theta}{2}}),
\end{eqnarray*}
as desired.
\end{proof}

\begin{remark}[(see also Remark \ref{thetaRemark2} )]\label{thetaRemark}
	By combining {\it Remark \ref{SchwartzClassRemark}} and {\bf Lemma \ref{FractionalHilbertLemma}} one can easily infer that the components of $\Phi_\alpha(\X,t;\theta)$, say $\frac{1}{2}(I\pm \mathcal{H})K_{\alpha,n}(\X,te^{\pm i\frac{\pi\theta}{2}})$, are well defined distributions with membership in the {\it Bochner type spaces} $L^p(\BR^n;\BC\otimes\cl_{0,n})$ in case where we restrict to values of $p$ in the range $1< p<\infty$ ($p\neq 1$ and $p\neq \infty$).
\end{remark}

\begin{remark}[(see also Remark \ref{LiWongRemark}.)]
	In the view of the set of identities 
	\begin{eqnarray*}
		\D^{2m}= (-\Delta)^{\frac{2m}{2}}\exp\left( \frac{i\pi0}{2} \mathcal{H}\right)& \mbox{and}&
		\pm i\D^{2m+1}=(-\Delta)^{\frac{2m+1}{2}}\exp\left( \pm \frac{i\pi1}{2} \mathcal{H}\right)
	\end{eqnarray*}
	that yield straightforwardly from eq. (\ref{Dk}) and from {\bf Lemma \ref{FractionalHilbertLemma}} one can say that the fundamental solutions $\Phi_\alpha(\X,t;\theta)$ of (\ref{FractionalDirac}) encompasses the fundamental solutions $K_{2m,n}(\X,t)$ (see eq. \ref{LevyDistribution}) of the polyharmonic heat operator $\partial_t+(-\Delta)^m$ ($\Phi_{2m}(\X,t;0)=K_{2m,n}(\X,t)$) as well as the fundamental solutions $\Phi_{2m+1}(\X,t;-1)$ and $\Phi_{2m+1}(\X,t;1)$ of the higher-order Dirac-type operators $\partial_t- i \D^{2m+1}$ and $\partial_t+ i \D^{2m+1}$, respectively.
\end{remark}

\subsection{Proof of Theorem \ref{FundamentalSolutionDiracSeries}}\label{ProofMainResults2}

To prove {\bf Theorem \ref{FundamentalSolutionDiracSeries}} we will use the fact that the Riesz-Hilbert transform $\mathcal{H}$, as represented through eq. (\ref{RieszHilbertPseudoDiff}), may be expressed as a linear combination involving the Riesz operators $R_j=\mathcal{F}^{-1}\frac{-i\xi_j}{|\XI|}  \mathcal{F}$:
\begin{eqnarray}
\label{RieszHilbertLinearCombination}\mathcal{H}=\sum_{j=1}^{n}\e_j R_j.
\end{eqnarray}
on which each $R_j$ ($j=1,2,\ldots,n$) is a singular integral operator uniquely determined by the kernel functions 
\begin{eqnarray}
\label{RieszKernel}
E_j(x)=\frac{\Gamma\left(\frac{n+1}{2}\right)}{\pi^{\frac{n+1}{2}}}\frac{x_j}{|\X|^{n+1}}.
\end{eqnarray}

In particular, the spectral property (cf.~\cite[p.~224]{stein1971introduction})
\begin{eqnarray}
\label{RieszKernelFourier}
(\mathcal{F}E_j)(\XI)=\frac{-i\xi_j}{|\XI|}
\end{eqnarray}
together with framework developed in Section \ref{MellinBarnesSection} will be of foremost interest to derive a closed formula for $\Phi_\alpha(\X,t;\theta)$.

\begin{proof}	
	
	\paragraph{(Theorem \ref{FundamentalSolutionDiracSeries})}
	
	\paragraph{\it Proof of Statement \ref{Statement1Theorem}.}
	
	First, recall that in the view of \cite[p.225, Theorem 3.1]{stein1971introduction} and of the identity (\ref{RieszKernelFourier}) involving the Riesz kernels (\ref{RieszKernel}) one can easily infer, by linearity arguments, that is a convolution-type operator represented in $L^p(\BR^n;\cl_{0,n})$ ($1<p<\infty$)
	through the singular integral operator identity
	\begin{eqnarray}
	\label{RieszHilbertSingularInt}
	\mathcal{H} \varPsi(\X,t)=\frac{\Gamma\left(\frac{n+1}{2}\right)}{\pi^{\frac{n+1}{2}}} P.V. \int_{\BR^n} \varPsi(\X-\Y,t)\frac{y}{|y|^{n+1}}d\Y, &\mbox{with} & \Y=\sum_{j=1}^n y_j\e_j
	\end{eqnarray}
	
	Then, from an easy algebraic manipulation one gets that $\Phi_\alpha(\X,t;\theta)$, as obtained in {\bf Theorem \ref{FundamentalSolutionDirac}}, admits the singular integral representation
	\begin{eqnarray}
	\label{PhiSingularInt}
	\Phi_\alpha(\X,t;\theta)&=&\Re K_{\alpha,n}(\X,te^{i\frac{\pi\theta}{2}})+i\frac{\Gamma\left(\frac{n+1}{2}\right)}{\pi^{\frac{n+1}{2}}} P.V. \int_{\BR^n} \Im K_{\alpha,n}(\X-\Y,te^{i\frac{\pi\theta}{2}})\frac{\Y}{|\Y|^{n+1}}d\Y,
	\end{eqnarray}
with
\begin{eqnarray*}
\Re K_{\alpha,n}(\X,te^{i\frac{\pi\theta}{2}})&=& \frac{K_{\alpha,n}(\X,te^{i\frac{\pi\theta}{2}})+K_{\alpha,n}(\X,te^{-i\frac{\pi\theta}{2}})}{2}, \\
\Im K_{\alpha,n}(\X-\Y,te^{i\frac{\pi\theta}{2}})&=& \frac{K_{\alpha,n}(\X-\Y,te^{i\frac{\pi\theta}{2}})-K_{\alpha,n}(\X-\Y,te^{-i\frac{\pi\theta}{2}})}{2i}. 
\end{eqnarray*}

Thus, in the view of Wright series representation obtained in {\it Remark \ref{RemarkStatements123}}, the {\it proof of Statement \ref{Statement1Theorem}} is then immediate. 

\paragraph{\it Proof of Statement \ref{Statement2Theorem}.}

For the proof of {\it Statement \ref{Statement2Theorem}} of {\bf Theorem \ref{FundamentalSolutionDiracSeries}}
we note that for $\theta=0$ there holds 
\begin{center}
$\Re K_{\alpha,n}(\X,t)=K_{\alpha,n}(\X,t)$ and $\Im K_{\alpha,n}(\X-\Y,t)=0$
\end{center}
 so that (\ref{PhiSingularInt}) simplifies to 	\begin{eqnarray*}
 	\Phi_\alpha(\X,t;0)=\frac{2^{1-n}}{\alpha \pi^{\frac{n}{2}}t^{\frac{n}{\alpha}}}~{~}_1\Psi_1
 	\left[~\begin{array}{ll|} 
 		(\frac{n}{\alpha},\frac{2}{\alpha})&  \\		
 		(\frac{n}{2},1) & 
 	\end{array} ~ -{\frac{|\X|^2\tau^{-\frac{2}{\alpha}}}{4}}\right] &\mbox{[~case of $\tau=t$ in eq. (\ref{LevyDistributionWright11})~]}.
 \end{eqnarray*}

\end{proof}

\section*{Acknowledgments}

This research was supported by The Center for Research and Development in Mathematics and Applications (CIDMA) through the Portuguese Foundation for Science and Technology (FCT), references UIDB/04106/2020 and UIDP/04106/2020.

\subsection*{Conflict of interest}

The author declare no potential conflict of interests.

\bibliography{ToKlaus_NelsonFaustino_arXiv}%

\end{document}